\newtheorem{Th}{Theorem}[section]
\newtheorem{Prop}[Th]{Proposition}
\newtheorem{Prob}[Th]{Problem}
\newtheorem{Rem}[Th]{Remark}
\newtheorem{Conj}[Th]{Conjecture}
\newcommand{\N}{\mathbb{N}}
\newcommand{\R}{\mathbb{R}}
\newcommand{\dt}{\Delta t}
\newcommand{\vmax}{v_{\rm max}}
\newcommand{\DS}{\displaystyle}
\title{Threshold dynamics in time-delay systems:\\ 
polynomial $\beta$-control in a pressing process\\ 
and connections to blow-up} 
\author{
Masato Kimura\thanks{Faculty of Mathematics and Physics, Kanazawa University, Kakuma, Kanazawa, Ishikawa 920-1192, Japan (mkimura@se.kanazawa-u.ac.jp)}
\and 
Hirotaka Kuma\thanks{Towa Seiki Co. Ltd., 5-26-7 Toei, Anjo, Aichi 446-0007, Japan (hkuma@tsco.co.jp)}
\and 
Yikan Liu\thanks{Department of Mathematics, Kyoto University, Kitashirakawa-Oiwake, Sakyo, Kyoto 606-8502, Japan (liu.yikan.8z@kyoto-u.ac.jp)}
\and 
Kazunori Matsui\thanks{Department of Logistics and Information Engineering, Tokyo University of Marine Science and Technology, 2-1-6 Etchujima, Koto, Tokyo 135-8533, Japan (kmat002@kaiyodai.ac.jp)}
\and 
Masahiro Yamamoto\thanks{Department of Mathematical Sciences, the University of Tokyo, 3-8-1 Komaba, Meguro, Tokyo 153-8914, Japan (myama@ms.u-tokyo.ac.jp)}
\and 
Zhenxing Yang\thanks{Division of Mathematical and Physical Sciences, Kanazawa University, Kakuma, Kanazawa, Ishikawa 920-1192, Japan}
}
\date{}
\begin{document}
\maketitle

\begin{abstract}
This paper addresses a press control problem in straightening machines with small time delays due to system communication. To handle this, we propose a generalized \emph{$\beta$-control} method, which replaces conventional linear velocity control with a polynomial of degree $\beta \ge 1$. The resulting model is a delay differential equation (DDE), for which we derive basic properties through nondimensionalization and analysis.
Numerical experiments suggest the existence of a threshold initial velocity separating overshoot and non-overshoot dynamics, which we formulate as a conjecture. Based on this, we design a control algorithm under velocity constraints and confirm its effectiveness. We also highlight a connection between threshold behavior and finite-time blow-up in DDEs.
This study provides a practical control strategy and contributes new insights into threshold dynamics and blow-up phenomena in delay systems.
\end{abstract}

{\small
\textbf{Keywords:} Press control, Delay differential equation, Blow-up

\textbf{2020 Mathematics Subject Classification:} 34H05, 34K35, 93C43, 34K99\\
}

\maketitle
\begin{multicols}{2}
\section{Introduction}\label{sec:Introduction}
\setcounter{equation}{0}

Optimal control is essential for modern automated manufacturing, where increasingly sophisticated technologies demand maximum efficiency. Although control theory has a long history, it continues to evolve in response to technological advancements. The mathematical modeling and analysis of the target system play a critical role in developing effective control algorithms. When the process at the core of the system is described by a well-structured mathematical model, it becomes possible to propose robust strategies for optimal control.

~\\
One of the most widely used methods is PID control \cite{AH95,O'Dwyer09}, which remains prevalent in industrial applications. However, time delays—arising from communication and computation in feedback loops—can destabilize such systems, leading to overshoot, instability, or even chaotic behavior.

Delay differential equations (DDEs) \cite{Driver77,LL69} provide a mathematical framework for modeling systems with time delays and have been extensively studied, especially in engineering contexts \cite{HVL93, KM92}. Nonetheless, the dynamics of DDEs are often difficult to analyze, even for simple cases. Challenges such as instability, chaos \cite{IM87,MG77,WSG19}, and finite-time blow-up \cite{EJ06,EIIN21,IN22} remain active areas of research.

In this study, we address a press control problem in a straightening machine, modeled by a specific class of DDEs, where the initial velocity serves as the control variable. We first derive a nondimensionalized DDE model and clarify the basic mathematical properties of its solutions. Numerical experiments suggest the existence of a critical threshold for the initial value that separates two types of behavior: overshooting and asymptotic convergence. We formulate this observation as a conjecture and, under its validity, propose an effective control algorithm. This method also accommodates a constraint on maximum velocity.

Moreover, the conjectured threshold phenomenon has another implication. It enables an interpretation of the system's behavior in terms of finite-time blow-up. We demonstrate a connection between the press control problem and blow-up dynamics, offering new insight into blow-up phenomena in DDEs.

The structure of the paper is as follows. Section~\ref{sec:2} introduces the industrial context and formulates the control problem based on empirical data, presenting a generalized $\beta$-control model. Section~\ref{sec:num} derives the nondimensionalized DDE, analyzes its mathematical properties, and proposes the central conjecture based on numerical evidence. Section~\ref{sec:application} applies the conjecture to construct a practical control algorithm under a maximum velocity constraint, validated through further simulations. Section~\ref{sec:blowup} reinterprets these results as finite-time blow-up phenomena for a related DDE, allowing us to analyze blow-up rates and threshold behavior. Finally, Section~\ref{sec:conclusion} summarizes our results and reflects on the broader significance of this work, illustrating how a practical industrial problem can lead to new mathematical developments in control and delay systems.

\section{Industrial Background and Problem Formulation}\label{sec:2}
\setcounter{equation}{0}

\subsection{Straightening press and process description}
We focus on a straightening press machine used to correct bends in heat-treated automotive shafts. These shafts often exhibit residual deformation after heat treatment, which can lead to undesirable vibration and noise during operation. Thus, precise straightening is necessary. The straightening process typically involves the following steps:

\begin{enumerate}
    \item Measure the shaft strain using several sensors placed along its length.
    \item Determine the pressing position, direction, and stroke length required to straighten the shaft.
    \item Control the press machine to perform pressing according to the parameters decided in step 2.
    \item Repeat steps 1--3 until the strain falls below the specified tolerance.
\end{enumerate}

For more details on shaft strain measurement, see \cite{Kawai16,LYK16,YKK16}.  
In this study, we focus on step 3: determining how to accurately press the shaft by a prescribed stroke. In particular, we consider the case where the press velocity 
$V$ is determined by the following generalized control rule:
\begin{align}\label{old_press}
    V(X) = v_0\left|\frac{\ell - X}{\ell}\right|^\beta,
\end{align}
where $\beta \ge 1$, $v_0 > 0$ is the initial velocity, 
$\ell$ is the target pressing stroke, and $X=X(t)$ is the position measured by the sensor
at time $t\ge 0$ with the initial position $X(0)=0$.

\begin{center}
  \includegraphics[width=\columnwidth,clip]{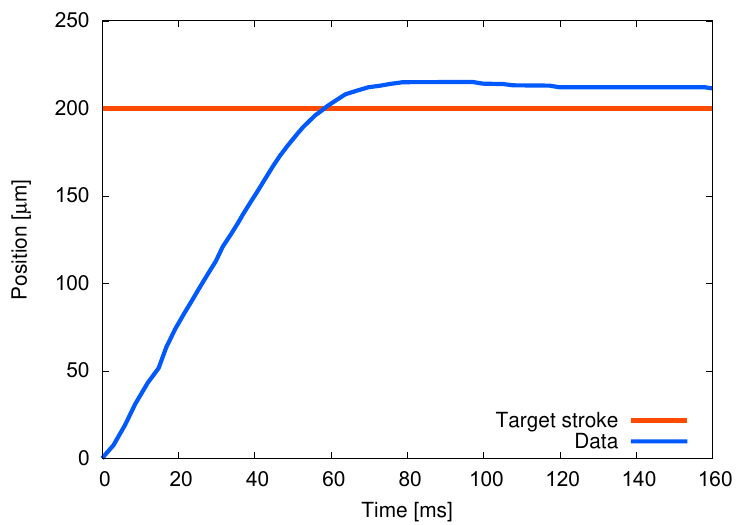}
  \captionof{figure}{The target pressing stroke (red line) and the actual sensor position (blue line), illustrating the overshoot phenomenon due to time delay. The press is stopped after overshooting.}
  \label{fig_overshoot}
\end{center}

\begin{center}
  \includegraphics[width=\columnwidth,clip]{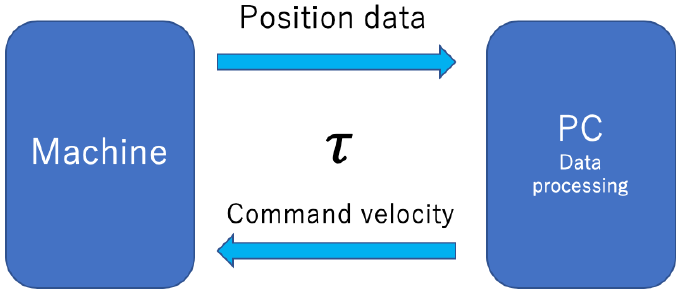}
  \captionof{figure}{Schematic of signal flow in the straightening press machine.}
  \label{fig_timedelay}
\end{center}

We refer to this control law as the \emph{$\beta$-control method} in this paper.
In conventional control systems, a simpler rule with exponent 
$\beta=1$ was often employed for ease of implementation. 
Our proposed method generalizes this by introducing the exponent
$\beta$ explicitly, allowing for more flexible and precise control of the press dynamics.

\subsection{Modeling time delay and overshoot phenomena}\label{subsec:2.2}

\begin{center}
  \begin{minipage}{\linewidth}\centering
    \includegraphics[width=\linewidth]{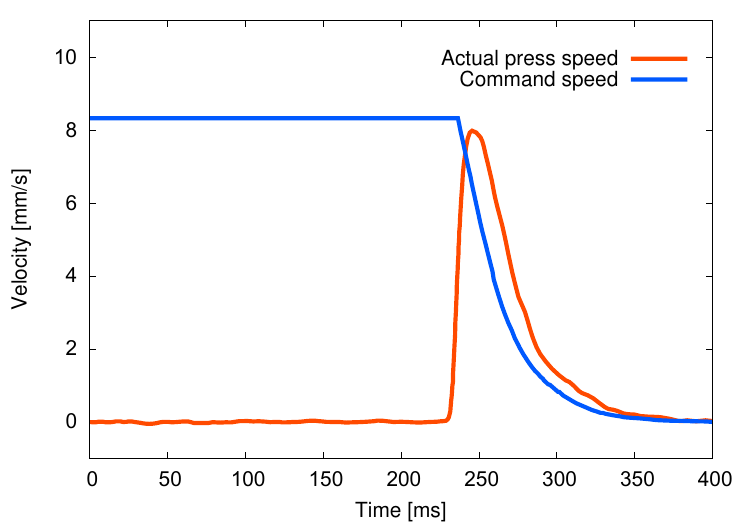}
    \par\small (a)
  \end{minipage}

  \vspace{0.5em}

  \begin{minipage}{\linewidth}\centering
    \includegraphics[width=\linewidth]{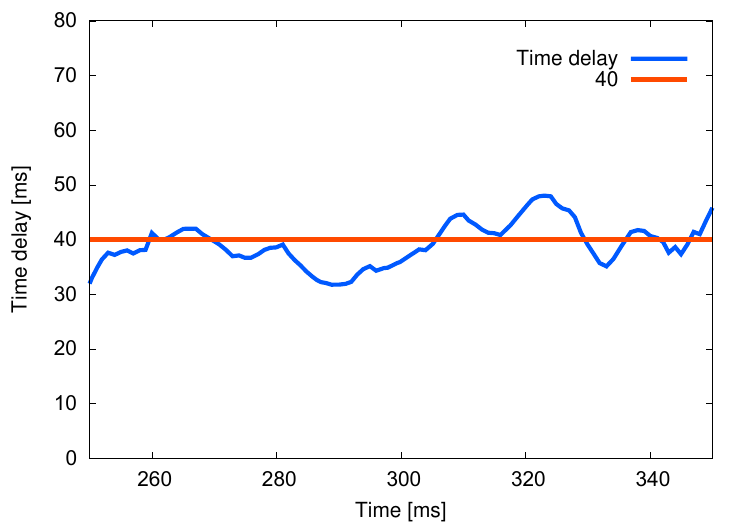}
    \par\small (b)
  \end{minipage}

  \captionof{figure}{Example of press machine data. (a) Comparison of the actual and commanded press velocities. The actual velocity is valid only after $t = 250$ ms. (b) Estimation of time delay $\tau$ as the horizontal gap between the two curves. The average delay is approximately $\tau = 40$ ms.}
  \label{fig_realdelay}
\end{center}

In the absence of delay, the differential equation $dX/dt = V(X)$ with $X(0)=0$ ensures that $X(t)$ converges to $\ell$ as $t \to \infty$. However, in practice, the system often overshoots the target position, as shown in Figure~\ref{fig_overshoot}. One major cause is the time delay between position measurement and actuation, introduced by signal transmission, data processing, and machine characteristics (Figure~\ref{fig_timedelay}). Figure~\ref{fig_overshoot} is based on experimental data from Towa Seiki Co. Ltd. using a conventional control algorithm.

Let $x(t)$ denote the true shaft position at time $t$. The position data used in the velocity command is then modeled as
\begin{align}\label{delay_pos}
    X(t) = x(t - \tau),
\end{align}
where $\tau > 0$ is the time delay.

The delay $\tau$ can be estimated by comparing the commanded velocity $V(X)$ with the measured actual press velocity $V_\text{meas}$ under controlled conditions. Figure~\ref{fig_realdelay} illustrates an example. From the data after $t = 250$ ms, the delay is estimated to be approximately $\tau = 40$ ms.

Hereafter, we assume that $\tau$ is a given positive constant independent of time and the velocity command.

We now formulate a delay differential equation (DDE) that captures this behavior. Given parameters $v_0, \ell, \tau > 0$ with $v_0 \tau < \ell$, and $\beta \ge 1$, the press velocity $dx/dt$ is assumed to be given by:
\begin{align}
\frac{dx}{dt}(t)
    &= \begin{cases}
       {\DS v_0 \left|\frac{\ell - x(t - \tau)}{\ell} \right|^\beta }& (t > \tau), \\
        v_0 & (0 \le t \le \tau),
    \end{cases}\label{vel_sys}\\
x(0) &= 0.\label{x_ini}
\end{align}
The condition $v_0 \tau < \ell$ ensures that the shaft does not overshoot the target stroke $\ell$ during the delay interval. Based on this model, we consider the following control problem.

\medskip
\noindent
\textbf{Press Control Problem:}  
Given parameters $\ell$, $\tau$, and $\beta$, find the optimal initial velocity $v_0$ that achieves the target stroke without overshoot and in the shortest time.

\section{Nondimensionalized DDE Model}\label{sec:num}
\setcounter{equation}{0}

\subsection{Scaling and basic properties}\label{subsec:scale}

The system \eqref{vel_sys} together with the initial condition \eqref{x_ini} is equivalent to the following problem.  
For \( v_0, \ell, \tau > 0 \) with \( v_0\tau < \ell \), and \( \beta \ge 1 \), find \( x:[0,\infty)\rightarrow\R \) such that
\begin{align}\label{sys_general}
\left\{
\begin{aligned}
    \frac{dx}{dt}(t)
    &= v_0 \left|\frac{\ell - x(t-\tau)}{\ell}\right|^\beta 
    &&(t > \tau), \\
    x(t) &= v_0 t && (0 \le t \le \tau).
\end{aligned}
\right.
\end{align}

To nondimensionalize system \eqref{sys_general}, we introduce the following dimensionless quantities:
\begin{align}\label{def_normalize}
s \coloneqq \frac{t}{\tau}, \quad
u(s) \coloneqq \frac{x(\tau s)}{\ell}, \quad
w_0 \coloneqq \frac{\tau v_0}{\ell} \in (0,1).
\end{align}
Then, for all \( s > 1 \), we have
\begin{align*}
    \frac{du}{ds}(s) 
    &= \frac{\tau}{\ell} \frac{dx}{dt}(\tau s) 
    = \frac{\tau v_0}{\ell} \left| \frac{\ell - x(\tau s - \tau)}{\ell} \right|^\beta \\
    &= w_0 \left|1 - u(s - 1)\right|^\beta,
\end{align*}
while for \( 0 \le s \le 1 \), it follows that
\[
    u(s) = \frac{x(\tau s)}{\ell} = \frac{v_0 \tau s}{\ell} = w_0 s.
\]
Hence, substituting \eqref{def_normalize} into \eqref{sys_general} gives the following nondimensionalized system:
\begin{align*}
\left\{
\begin{aligned}
    \frac{du}{ds}(s)
    &= w_0\left|1 - u(s - 1)\right|^\beta && (s > 1), \\
    u(s) &= w_0 s && (0 \le s \le 1),
\end{aligned}
\right.
\end{align*}
with \( 0 < w_0 < 1 \).

For convenience in the discussion that follows, we rename the independent variable \( s \) back to \( t \):
\begin{align}\label{sys_normalize}
\left\{
\begin{aligned}
    \frac{du}{dt}(t)
    &= w_0\left|1 - u(t - 1)\right|^\beta && (t > 1), \\
    u(t) &= w_0 t && (0 \le t \le 1).
\end{aligned}
\right.
\end{align}

Using standard analytical techniques for delay differential equations (e.g., \cite[Theorem 3.1]{Smith11}), we obtain the following theorem.

\begin{Th}
Suppose \( \beta > 0 \) and \( w_0 \in \R \). Then there exists a unique function \( u \in C^1([0,\infty)) \) that satisfies \eqref{sys_normalize}.
\end{Th}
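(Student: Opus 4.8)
The plan is to use the classical \emph{method of steps}, exploiting the fact that for $t\in[n,n+1]$ the delayed argument $t-1$ ranges over $[n-1,n]$, so the right-hand side of \eqref{sys_normalize} depends only on values of $u$ already determined on the previous interval. Thus the DDE collapses, interval by interval, into a sequence of explicit integrations, and both existence and uniqueness follow by induction on $n\in\N$. No fixed-point argument, Lipschitz estimate, or Gronwall inequality is needed, because the delayed structure turns the right-hand side into an \emph{explicit} function of known data on each step; this is also what makes the solution automatically global.

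The base case is the prescribed piece: on $[0,1]$ we have $u(t)=w_0 t$, which is smooth and satisfies $u(0)=0$. For the inductive step, suppose $u$ has been constructed on $[0,n]$ for some $n\ge 1$, is $C^1$ there, equals $w_0 t$ on $[0,1]$, and satisfies $\frac{du}{dt}(t)=w_0|1-u(t-1)|^\beta$ for $1<t\le n$ (this last condition being vacuous when $n=1$). On $[n,n+1]$ define
\begin{equation*}
u(t)\coloneqq u(n)+\int_n^t w_0\,\bigl|1-u(s-1)\bigr|^\beta\,ds .
\end{equation*}
The integrand is well defined since $s-1$ runs over $[n-1,n]\subset[0,n]$, where $u$ is known; it is continuous on $[n,n+1]$ because $u$ is continuous and, for $\beta>0$, the map $r\mapsto|r|^\beta$ is continuous on all of $\R$ (in particular at $r=0$). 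Hence this formula defines a $C^1$ function on $[n,n+1]$ with $\frac{du}{dt}(t)=w_0|1-u(t-1)|^\beta$ there, and its value at $t=n$ matches the previously defined $u(n)$; moreover, since the integrand is a bounded continuous function of $s$ on this compact interval, $u$ stays finite on $[n,n+1]$, so the construction never breaks down and extends to all of $[0,\infty)$.

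The one point requiring a small verification is the $C^1$-matching at the integer nodes. At $t=1$, the left derivative from $u(t)=w_0 t$ is $w_0$, while the right derivative from the integral formula is $w_0|1-u(0)|^\beta=w_0|1-0|^\beta=w_0$; they agree. At each node $t=n$ with $n\ge 2$, the left derivative (from the step $[n-1,n]$) and the right derivative (from the step $[n,n+1]$) are both equal to $w_0|1-u(n-1)|^\beta$ by continuity of the integrand, so $u\in C^1$ across $t=n$. Patching the intervals together gives the desired $u\in C^1([0,\infty))$ solving \eqref{sys_normalize}.

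Uniqueness is the same induction run in reverse. If $u_1,u_2$ both solve \eqref{sys_normalize}, they agree on $[0,1]$ by the prescribed data; and if they agree on $[0,n]$, then for $t\in[n,n+1]$ each $u_i$ must satisfy the integral identity above (obtained by integrating the ODE and using $u_i(n)$), and since $u_1=u_2$ on $[n-1,n]$ the two integrals coincide, forcing $u_1=u_2$ on $[n,n+1]$. The only subtlety worth flagging is the continuity of $r\mapsto|r|^\beta$ at the origin when $\beta$ is not an integer — this is exactly what upgrades the solution from merely Lipschitz to $C^1$, and it is why the hypothesis $\beta>0$ (rather than $\beta\ge 1$) already suffices for the stated conclusion.
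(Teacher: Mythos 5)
Your proof is correct and uses essentially the same method of steps as the paper: inductive construction by explicit integration on each interval $[n,n+1]$, continuity of $r\mapsto|r|^\beta$ for $\beta>0$, and verification that the left and right derivatives at $t=1$ both equal $w_0$. The only difference is that you spell out the uniqueness induction and the matching at interior nodes more explicitly, which the paper leaves implicit.
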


\begin{proof}
Define \( f(u) \coloneqq w_0 |1 - u|^\beta \). Clearly, \( f \in C^0(\R) \).  
We construct \( u(t) \) inductively. For each \( k \in \N \), define
\[
u(t) = u(k) + \int_k^t f(u(s - 1))\,ds \quad \text{for } t \in [k, k+1],
\]
where \( u(t) = w_0 t \) for \( t \in [0,1] \).  
This defines a continuous function \( u \in C^0([0,\infty)) \) satisfying \( \frac{du}{dt}(t) = f(u(t - 1)) \) for \( t \in (1,\infty) \setminus \N \), and hence \( u \in C^1([1,\infty)) \).  

To show differentiability at \( t = 1 \), note that
\[
\frac{d^\pm u}{dt}(1) = \lim_{\varepsilon \to +0} \frac{u(1 \pm \varepsilon) - u(1)}{\pm \varepsilon} = w_0,
\]
so \( u \in C^1([0,\infty)) \), as desired.
\end{proof}

In the remainder of this section, we restrict attention to the case \( \beta \ge 1 \), \( w_0 \in (0,1) \). The solution of \eqref{sys_normalize} will henceforth be denoted by \( u(t; \beta, w_0) \).

From the first equation in \eqref{sys_normalize}, it is evident that \( u \) is non-decreasing. The next theorem shows that the derivative \( \frac{du}{dt}(t) \) never vanishes on an interval, implying that \( u(t) \) cannot remain constant at 1 over any nontrivial interval. It may approach 1 asymptotically but never reach a plateau.

\begin{Th}\label{Th:non-decreasing}
Let $w_0>0$ and \( a, b \in \R \) with \( a+1 < b \).  
Suppose \( u \in C^0([a,b]) \cap C^1([a+1,b]) \), and assume
\[
u(t) < 1 \quad \text{for all } t \in [a,a+1],
\]
and
\begin{align}\label{eq:delay}
\frac{du}{dt}(t) = w_0 \left|1 - u(t - 1)\right|^\beta \quad (a+1 \le t \le b).
\end{align}
If there exists \( t_* \in (a+1,b) \) such that \( u(t_*) = 1 \), then the following hold:
\begin{align}
\frac{du}{dt}(t_*) > 0, \quad 
u(t) \begin{cases}
< 1 & (a \le t < t_*), \\
= 1 & (t = t_*), \\
> 1 & (t_* < t \le b).
\end{cases}
\end{align}
\end{Th}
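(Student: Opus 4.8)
The plan is to analyze the sign of $1-u$ on the relevant intervals and propagate it through the delay equation \eqref{eq:delay}. First I would establish that $u$ is non-decreasing on all of $[a,b]$: since $u(t)<1$ on $[a,a+1]$, the right-hand side $w_0|1-u(t-1)|^\beta$ is $\ge 0$ for $t\in[a+1,a+2]$, so $u$ is non-decreasing there; but then $u(t)\le u(t_*)$ forces nothing yet, so instead I would argue by a continuity/bootstrapping argument along successive unit intervals that $\frac{du}{dt}\ge 0$ throughout $[a+1,b]$, hence $u$ is non-decreasing on $[a,b]$. Combined with $u(t_*)=1$ and monotonicity, this already gives $u(t)\le 1$ for $t\le t_*$ and $u(t)\ge 1$ for $t\ge t_*$; the work is to upgrade the weak inequalities to strict ones and to show $\frac{du}{dt}(t_*)>0$.

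Next I would pin down where the derivative can vanish. From \eqref{eq:delay}, $\frac{du}{dt}(t)=0$ if and only if $u(t-1)=1$, i.e. (by monotonicity) if and only if $t-1\ge t_*$, i.e. $t\ge t_*+1$. In particular, for $t\in[a+1,t_*+1)$ we have $u(t-1)<1$ wherever $t-1<t_*$, so $\frac{du}{dt}(t)=w_0(1-u(t-1))^\beta>0$; since $t_*\in(a+1,b)$ forces $t_*-1<b-1$, and I only need values of the argument below $t_*$, this shows $u$ is \emph{strictly} increasing on $[a+1,t_*+1)\cap[a,b]$ — and on $[a,a+1]$ strict monotonicity is not guaranteed by the hypotheses, but that is harmless because I only need it near $t_*$. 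The key point is that on a neighborhood of $t_*$ the function $u$ is strictly increasing, which immediately yields $u(t)<1$ for $t<t_*$ close to $t_*$, $u(t)>1$ for $t>t_*$ close to $t_*$. To extend $u(t)>1$ to the whole of $(t_*,b]$, note $u$ is non-decreasing, so once $u>1$ it stays $>1$; to extend $u(t)<1$ to all of $[a,t_*)$, use that $u(a)\le u(t)\le u(t_*)=1$ and that if $u(t_0)=1$ for some $t_0<t_*$ then by non-decrease $u\equiv 1$ on $[t_0,t_*]$, contradicting strict monotonicity on the sub-neighborhood just below $t_*$ (or, if $t_0$ is far from $t_*$, iterate: $u\equiv 1$ on $[t_0,t_*]$ makes $\frac{du}{dt}\equiv 0$ on $[t_0+1,t_*+1]$, but it also forces, stepping backwards one delay at a time, a contradiction with $u(t_*-\epsilon)<1$). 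Finally, $\frac{du}{dt}(t_*)=w_0|1-u(t_*-1)|^\beta$, and $u(t_*-1)<1$ strictly because $t_*-1<t_*$ and we have just shown $u<1$ strictly on $[a,t_*)$; hence $\frac{du}{dt}(t_*)=w_0(1-u(t_*-1))^\beta>0$.

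The main obstacle I anticipate is the bookkeeping in the backward-stepping argument that rules out $u$ attaining the value $1$ strictly before $t_*$: one must handle the possibility that the "first hitting time" of $1$ lies more than one delay unit below $t_*$, and carefully chain the implication "$u\equiv 1$ on an interval $\Rightarrow \frac{du}{dt}\equiv 0$ one unit later $\Rightarrow$ $u$ constant there too" without running off the left end $a$ of the domain, where only $u<1$ (not a derivative condition) is assumed. Packaging this cleanly — perhaps by defining $t_0:=\inf\{t\in[a,b]:u(t)=1\}$ and showing $t_0=t_*$ directly from continuity plus the local strict monotonicity near $t_*$ — should sidestep most of the case analysis, and the rest is routine.
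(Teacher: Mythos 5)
Your proposal is essentially the paper's proof: the decisive step in both is to rule out $u$ attaining $1$ before $t_*$ by observing that a plateau $u\equiv 1$ on $[c,t_*]$ forces $\frac{du}{dt}\equiv 0$ there, hence $u\equiv 1$ on $[c-1,t_*-1]$, and iterating this one delay unit backwards until it collides with the hypothesis $u<1$ on $[a,a+1]$; once $u<1$ on $[a,t_*)$ is known, $u(t_*-1)<1$ gives $\frac{du}{dt}(t_*)=w_0(1-u(t_*-1))^\beta>0$ and strict increase just past $t_*$, which together with non-decrease yields $u>1$ on $(t_*,b]$. One caution on the ordering of your write-up: the assertion that $u(t-1)<1$ whenever $t-1<t_*$ (and the resulting strict monotonicity near $t_*$) is precisely what the backward-stepping argument must establish, so it cannot be invoked first to exclude an earlier hitting time of $1$ --- make the backward iteration (your parenthetical) the primary argument, as the paper does, and note that the equivalence ``$\frac{du}{dt}(t)=0$ iff $t-1\ge t_*$'' is not needed and is in fact false for $t-1>t_*$ because of the absolute value. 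Finally, $u$ need not be monotone on $[a,a+1]$ since no equation is imposed there, but this is harmless because $u<1$ on that interval is a hypothesis.
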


\begin{proof}
We first show \( u(t) < 1 \) for all \( t \in [a, t_*) \).  
Since \( u(t) < 1 \) for \( t \in [a, a+1] \), it suffices to consider \( t \in (a+1, t_*) \).  
Suppose for contradiction that there exists \( c \in (a+1, t_*) \) such that \( u(c) = 1 \).  
Since \eqref{eq:delay} implies \( \frac{du}{dt}(t) \ge 0 \) on \( [a+1, b] \), \( u \) is non-decreasing there, so \( u(t) = 1 \) for all \( t \in [c, t_*] \).

Then \( \frac{du}{dt}(t) = 0 \) on \( [c, t_*] \), and \eqref{eq:delay} yields \( u(t-1) = 1 \) for \( t \in [c, t_*] \), i.e., \( u(s) = 1 \) for \( s \in [c-1, t_*-1] \).  
If \( c-1>a+1 \), we repeat the same argument on the interval
\( [c-1,t_*-1] \), and by iterating this step (or using backward uniqueness for the DDE), we eventually obtain
\( u(s)=1 \) for some \( s\in[a,a+1] \), contradicting the assumption.
Hence \( u(t)<1 \) for all \( t\in[a,t_*) \).

Now for \( t \in [a+1, t_*+1) \cap [a+1, b] \), it follows that
\[
\frac{du}{dt}(t) = w_0 \left|1 - u(t - 1)\right|^\beta > 0,
\]
in particular \( \frac{du}{dt}(t_*) > 0 \).  
Integrating over any small interval after \( t_* \), we obtain \( u(t) > 1 \) for all \( t \in (t_*, b] \), completing the proof.
\end{proof}

\subsection{Numerical examples}\label{subsec:num}

Let $\dt \coloneqq 1/N$ be the time increment with $N \in \N$. Define $u_n = w_0 n\dt$ for $n = 0,1,\ldots, N$, and compute $u_n$ for $n = N+1, N+2, \ldots$ by iteratively solving the following equation:
\begin{align}\label{sys_discr}
    \frac{u_{n+1} - u_n}{\dt} = w_0|1 - u_{n-N}|^\beta.
\end{align}

\begin{center}
  \includegraphics[width=\columnwidth]{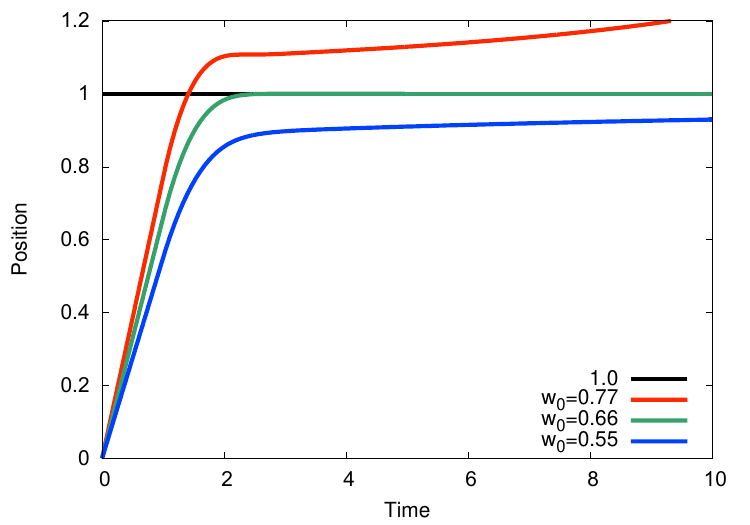}
  \captionof{figure}{Numerical results with $\beta = 2$ and $\dt = 0.01$ (red, green, and blue lines),
        compared to the constant value 1 (black line).
        The red, green, and blue lines show the evolution of $u(t)$ over $t \in [0,10]$ for
        $w_0 = 0.55$, $0.66$, and $0.77$, respectively.}
  \label{fig_ustar}
\end{center}

Here, we employ the explicit Euler scheme to numerically integrate the delay differential equation. 
Although Runge--Kutta methods are often used for improved numerical stability in delay differential equations, the Euler scheme produced sufficiently stable and reliable results in our simulations. 
Thus, we adopt the Euler scheme throughout this study.

Figure~\ref{fig_ustar} shows numerical results for $\beta = 2$, $w_0 = 0.55$, $0.66$, and $0.77$, with $\dt = 0.01$. 
These results indicate that a larger initial velocity $w_0$ leads to larger values of $u(t)$ over time.

Hence, for fixed $\beta \ge 1$, we expect that 
\begin{align}\label{cp1}
   &0 < w_0 < \tilde{w}_0 < 1 \notag\\
   \Rightarrow\,	&
   u(t;\beta,w_0) < u(t;\beta,\tilde{w}_0)
   \quad (t > 0),
\end{align}
and that there exists a threshold value $g(\beta) \in (0,1)$ such that
\begin{align}\label{gb}
    \left\{\begin{aligned}
    g(\beta) < w_0 < 1 
    &\Rightarrow 
    \begin{cases}
        ^\exists T > 1 \text{ s.t.} \\
        u(t;\beta,w_0) > 1 
        \text{ for } t > T,
    \end{cases}\\
    0 < w_0 \le g(\beta) 
    &\Rightarrow 
    \begin{cases}
        u(t;\beta,w_0) < 1 \text{ for } t > 0,\\
        \lim_{t \rightarrow \infty} u(t;\beta,w_0) = 1.
    \end{cases}
    \end{aligned}\right.
\end{align}
In other words, if $w_0 > g(\beta)$, then $u(t)$ overshoots the value 1. If $w_0 \le g(\beta)$, $u(t)$ remains strictly less than 1 and approaches 1 asymptotically as $t \to \infty$.

Using the bisection method, one can numerically estimate $g(\beta)$ for each $\beta$ (see Figure~\ref{fig_fit}).

\begin{center}
  \begin{minipage}{\linewidth}\centering
    \includegraphics[width=\linewidth]{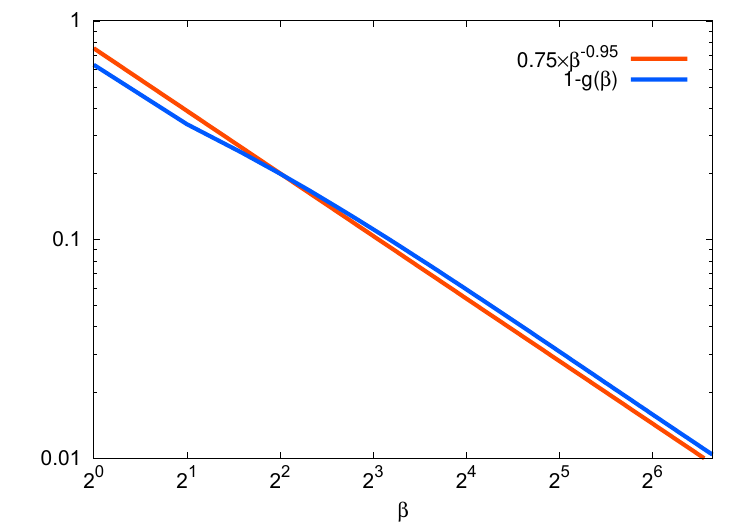}
    \par\small (a)
  \end{minipage}

  \vspace{0.5em}

  \begin{minipage}{\linewidth}\centering
    \includegraphics[width=\linewidth]{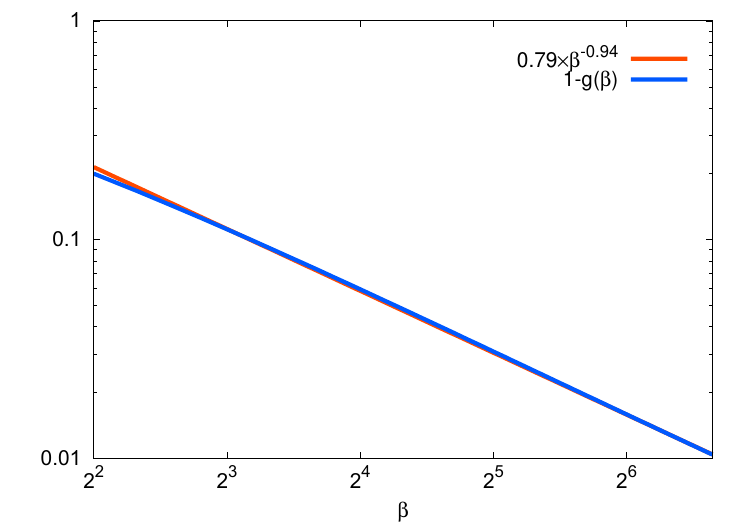}
    \par\small (b)
  \end{minipage}

  \vspace{0.5em}

  \begin{minipage}{\linewidth}\centering
    \includegraphics[width=\linewidth]{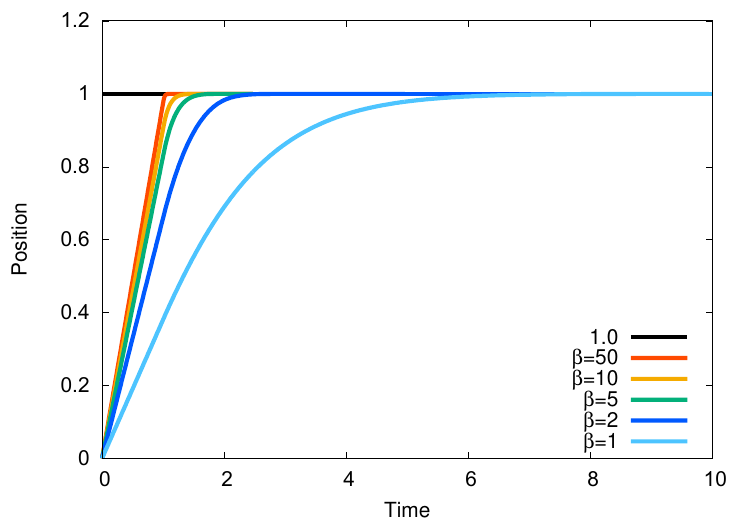}
    \par\small (c)
  \end{minipage}

  \captionof{figure}{Log-log plots of $1 - g(\beta)$ and its linear approximation over $\beta \in [1,100]$ (a) and $\beta \in [4,100]$ (b), and numerical simulations with $w_0 = g(\beta)$ for $\beta = 1,2,5,10,50$ (c).}
  \label{fig_fit}
\end{center}

Figure~\ref{fig_fit}(c) suggests the inequality
\begin{align}\label{beta_incre}
    \beta < \tilde{\beta} \Rightarrow 
    u(t, \beta, g(\beta)) < u(t, \tilde{\beta}, g(\tilde{\beta}))
    \quad (t > 0),
\end{align}
i.e., when the initial velocity $w_0 = g(\beta)$, the larger the value of $\beta$, the faster $u(t)$ approaches 1. 

Therefore, for fixed $\ell$, $\tau$, and $\beta$, the optimal initial velocity $v_0$ in \eqref{vel_sys} should be set as
\begin{align}\label{best_v}
    v_0 = \frac{\ell}{\tau} g(\beta),
\end{align}
which provides a solution to the press control problem discussed in Subsection~\ref{subsec:2.2}. It is also preferable to choose $\beta$ as large as possible.

Figures~\ref{fig_fit}(a) and (b) show that the relationship between $\beta$ and $1 - g(\beta)$ is nearly linear on a log-log scale. For $\beta \in [4,100]$, a linear regression yields the approximation
\[
    g(\beta) \approx 1 - 0.79 \times \beta^{-0.94}.
\]
Hence, a practical estimate of the appropriate initial velocity is
\[
    v_0 \approx \frac{\ell}{\tau} \left(1 - 0.79 \times \beta^{-0.94}\right).
\]
This approximation can be used as a convenient reference when choosing parameters for real-world applications.

However, if $\beta$ is large, the machine will decelerate rapidly near the target stopping position. As a result, the stopping accuracy may be sensitive to the time delay and measurement errors. Further discussion is provided in Section~\ref{sec:application}.

\subsection{Conjecture}\label{conjecture}

Based on the numerical experiments presented in Subsection~\ref{subsec:num}, we propose the following conjecture regarding the dynamics of the DDE \eqref{sys_normalize}.

\begin{Conj}\label{Conj:overshoot}
For $\beta \ge 1$, there exists a unique value $g(\beta) \in (0,1)$ such that the solution $u(t; \beta, w_0)$ of \eqref{sys_normalize} exhibits the following behavior:
\begin{enumerate}
\renewcommand{\labelenumi}{(\roman{enumi})}
\item 
    If $w_0 \in (g(\beta),1)$, there exists a constant $T_* = T_*(\beta, w_0) > 0$ such that
    \begin{align*}
        u(t; \beta, w_0)
        \begin{cases}
            < 1 & \text{for } 0 \le t < T_*,\\
            = 1 & \text{at } t = T_*,\\
            > 1 & \text{for } t > T_*.
        \end{cases}
    \end{align*}
\item 
    If $w_0 \in (0, g(\beta)]$, then
    \begin{align*}
        u(t; \beta, w_0)
        \begin{cases}
            < 1 & \text{for } t \ge 0,\\
            \to 1 & \text{as } t \to \infty.
        \end{cases}
    \end{align*}
\item The comparison properties \eqref{cp1} and \eqref{beta_incre} hold.
\end{enumerate}
\end{Conj}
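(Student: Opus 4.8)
The plan is to treat the three parts of Conjecture~\ref{Conj:overshoot} in a logical rather than numerical order, since parts (i) and (ii) are essentially a dichotomy governed by the monotone dependence on $w_0$ asserted in part (iii). So I would first attempt the comparison principle \eqref{cp1}. The natural approach is a standard DDE comparison argument on successive unit intervals: on $[0,1]$ both solutions equal $w_0 t$ and $\tilde w_0 t$ respectively, so $u(t;\beta,w_0)<u(t;\beta,\tilde w_0)$ there; then, inductively, if the inequality holds on $[0,k]$ it holds on $[k,k+1]$ because
\[
\frac{d}{dt}\bigl(u(t;\beta,\tilde w_0)-u(t;\beta,w_0)\bigr)
= \tilde w_0|1-u(t-1;\beta,\tilde w_0)|^\beta - w_0|1-u(t-1;\beta,w_0)|^\beta,
\]
and as long as both delayed values stay $\le 1$ the map $v\mapsto w_0(1-v)^\beta$ is decreasing in $v$ while the prefactor $w_0$ is increasing, so each term on the right is $\ge$ the corresponding one with the smaller data; strict inequality propagates from the initial segment. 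The delicate point is handling the regime where the larger solution has already crossed $1$: once $u(\cdot;\beta,\tilde w_0)>1$ on part of the delay window, $|1-u|^\beta$ is no longer monotone, so I would argue that on the first interval where this happens the smaller solution is still safely below $1$ (using Theorem~\ref{Th:non-decreasing} and the induction hypothesis) and the ordering of the \emph{values} $u$, once established strictly, cannot be lost because a crossing would force equality of derivatives at the crossing time, which a Gronwall-type estimate on $u(t;\beta,\tilde w_0)-u(t;\beta,w_0)$ rules out.

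Next I would use this monotonicity to define $g(\beta)$ and obtain the dichotomy. Let $\mathcal{O}(\beta)=\{w_0\in(0,1): u(\cdot;\beta,w_0)\text{ reaches }1\}$, the ``overshoot set.'' By \eqref{cp1}, $\mathcal{O}(\beta)$ is an up-set: if $w_0\in\mathcal{O}(\beta)$ and $\tilde w_0>w_0$ then $\tilde w_0\in\mathcal{O}(\beta)$. I would check $\mathcal{O}(\beta)$ is nonempty for $w_0$ close to $1$ (for $w_0$ near $1$ the solution is close to $t$ on $[0,1]$ and then increases with derivative near $w_0$, overshooting after roughly one more unit of time — a direct estimate) and that $\mathcal{O}(\beta)$ does not contain small $w_0$ (for $w_0$ small one shows $u$ stays below $1$ by a crude bound, e.g. comparison with the no-delay ODE or an explicit supersolution). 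Then $g(\beta):=\inf\mathcal{O}(\beta)\in(0,1)$. Continuity of $w_0\mapsto u(t;\beta,w_0)$ on compact $t$-intervals (which follows from Gronwall on the integral formulation) gives that $g(\beta)\notin\mathcal{O}(\beta)$, so $w_0=g(\beta)$ is in the non-overshoot regime; this proves (i) for $w_0>g(\beta)$ once Theorem~\ref{Th:non-decreasing} is invoked to get the precise ``$<1$, then $=1$ once, then $>1$'' structure, and it proves the first line of (ii) for $w_0\le g(\beta)$.

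The remaining pieces are the asymptotic claim $u(t;\beta,w_0)\to1$ in (ii) and the comparison-in-$\beta$ inequality \eqref{beta_incre}. For the limit: $u(\cdot;\beta,w_0)$ is nondecreasing and bounded above by $1$, hence has a limit $L\le 1$; if $L<1$ then for large $t$ we have $\frac{du}{dt}(t)=w_0|1-u(t-1)|^\beta\ge w_0(1-L)^\beta>0$, forcing $u\to\infty$, a contradiction, so $L=1$. For \eqref{beta_incre} I would again run an interval-by-interval comparison, now between $u(t;\beta,g(\beta))$ and $u(t;\tilde\beta,g(\tilde\beta))$: on $[0,1]$ the claim is $g(\beta)t<g(\tilde\beta)t$, i.e. $g(\beta)<g(\tilde\beta)$, and then since all values stay in $[0,1)$ one wants $g(\beta)(1-a)^\beta\le g(\tilde\beta)(1-a)^{\tilde\beta}$ for $a\in[0,1)$ — note $(1-a)^{\tilde\beta}\le(1-a)^\beta$ since $1-a\le1$, so this needs $g(\beta)\le g(\tilde\beta)$ together with the delayed-value ordering, a subtle coupling.

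I expect the main obstacle to be precisely this: \textbf{proving $g$ is nondecreasing (or that \eqref{beta_incre} holds) is not a pure consequence of a one-variable comparison}, because increasing $\beta$ makes the nonlinearity $|1-u|^\beta$ smaller near $1$ but the threshold $g(\beta)$ itself is larger, and these two effects push the solution in opposite directions; closing the loop seems to require simultaneously controlling the rate at which $u(\cdot;\beta,g(\beta))$ approaches $1$ and the value $g(\beta)$, which is the genuinely hard analytic content and likely the reason this is stated as a conjecture rather than a theorem. A secondary difficulty is establishing existence of the threshold rigorously: showing $\mathcal{O}(\beta)$ is \emph{exactly} an interval $(g(\beta),1)$ — as opposed to something more complicated — relies entirely on the monotonicity \eqref{cp1}, so any gap in the comparison argument (particularly in the post-overshoot regime where $|1-u|^\beta$ loses monotonicity) propagates to the whole statement.
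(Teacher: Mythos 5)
First, note that the statement you are addressing is stated in the paper as Conjecture~\ref{Conj:overshoot} precisely because no proof is known: the paper offers only numerical evidence, plus a Proposition establishing the comparison \eqref{cp1} on the interval $0<t\le 2$ by explicit integration. So there is no proof in the paper to match your proposal against; the question is whether your outline actually closes the gap, and it does not.

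The concrete error is in the very first step, which you treat as routine. In the inductive derivative comparison you assert that, as long as both delayed values stay below $1$,
\[
\tilde w_0\,\bigl|1-u(t-1;\beta,\tilde w_0)\bigr|^\beta \;\ge\; w_0\,\bigl|1-u(t-1;\beta,w_0)\bigr|^\beta
\]
because ``the map is decreasing in $v$ while the prefactor is increasing.'' But these two monotonicities act in \emph{opposite} directions: the induction hypothesis gives $u(t-1;\beta,\tilde w_0)>u(t-1;\beta,w_0)$, so the factor $|1-\cdot|^\beta$ is \emph{smaller} for the larger solution, while the prefactor is larger. The net sign is indeterminate; for instance on $[1,2]$ one has $\frac{\partial}{\partial w_0}\bigl[w_0(1-w_0 s)^\beta\bigr]=(1-w_0 s)^{\beta-1}\bigl(1-(1+\beta)w_0 s\bigr)$, which is negative as soon as $w_0 s>1/(1+\beta)$. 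So the pointwise derivative ordering you rely on is false, and the ``first touching time'' argument does not rescue it, since at a touching point the larger-$w_0$ solution may legitimately have the smaller derivative. This is exactly why the paper can prove \eqref{cp1} only for $t\le 2$, where the solution has the closed form $u(t)=w_0+\frac{1}{\beta+1}\bigl(1-(1-w_0(t-1))^{\beta+1}\bigr)$, whose monotonicity in $w_0$ is checked on the antiderivative rather than on the integrand. You have therefore mislocated the central difficulty: it is not only the post-overshoot regime and the $\beta$-comparison \eqref{beta_incre} that are hard, but already the pre-overshoot comparison in $w_0$ for $t>2$. Two smaller gaps: your suggested proof that small $w_0$ avoids overshoot by ``comparison with the no-delay ODE'' goes the wrong way (since $u(t-1)\le u(t)$, the delayed equation majorizes the non-delayed one, so the ODE solution is a subsolution, not a supersolution), and without \eqref{cp1} the overshoot set need not be an interval, so $g(\beta)\coloneqq\inf\mathcal{O}(\beta)$ would not deliver the dichotomy in (i)--(ii). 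The parts of your outline that do work --- the limit $L=1$ in (ii) via monotone boundedness, the use of Theorem~\ref{Th:non-decreasing} to get the sign structure in (i), and the openness of the overshoot set via continuous dependence --- are correct but are not where the conjecture's difficulty lies.
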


Cases (i) and (ii) describe the two qualitatively distinct behaviors observed in our numerical simulations: overshooting and non-overshooting, respectively. The critical value $g(\beta)$ serves as a threshold separating these behaviors.

The monotonicity property \eqref{cp1} with respect to the initial value in (iii) enables the use of the bisection method to numerically identify $g(\beta)$.  
It is worth noting that this monotonicity has been proven analytically for the interval $0 < t \le 2$:

\begin{Prop}
    Fix $\beta \ge 1$. If
    \begin{align*}
        0 < w_0 < \tilde{w}_0 < 1,
    \end{align*}
    then
    \begin{align*}
        u(t; \beta, w_0) < u(t; \beta, \tilde{w}_0) \quad \text{for } 0 < t \le 2.
    \end{align*}
\end{Prop}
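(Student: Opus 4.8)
The plan is to split the interval $(0,2]$ at $t=1$ and handle the two pieces separately. On $(0,1]$ the solution is given explicitly by $u(t;\beta,w_0)=w_0 t$, so the claimed inequality $w_0 t < \tilde w_0 t$ is immediate from $w_0<\tilde w_0$ and $t>0$. The substance of the proof lies in the interval $(1,2]$, where for $t\in(1,2]$ the delay argument $t-1$ lies in $(0,1]$, so the right-hand side of the DDE involves only the explicit linear piece. Concretely, integrating \eqref{sys_normalize} from $1$ to $t$ gives, for $1<t\le 2$,
\begin{align*}
u(t;\beta,w_0) = w_0 + \int_1^t w_0\,\bigl|1-w_0(\sigma-1)\bigr|^\beta\,d\sigma.
\end{align*}
Since $w_0\in(0,1)$ and $\sigma-1\in(0,1]$, we have $0\le w_0(\sigma-1)<1$, so the absolute value can be dropped: $|1-w_0(\sigma-1)|^\beta = (1-w_0(\sigma-1))^\beta$.

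The next step is to define, for $1<t\le 2$,
\begin{align*}
\Phi(w) \coloneqq w + \int_1^t w\,\bigl(1-w(\sigma-1)\bigr)^\beta\,d\sigma
= w + w\int_0^{t-1}\bigl(1-w r\bigr)^\beta\,dr,
\end{align*}
where I substituted $r=\sigma-1$, and to show that $\Phi$ is strictly increasing in $w$ on $(0,1)$ for each fixed $t\in(1,2]$; the desired inequality is then exactly $\Phi(w_0)<\Phi(\tilde w_0)$. Writing the integral out, one can compute $\Phi(w) = w + \frac{1}{(\beta+1)}\bigl[1-(1-w(t-1))^{\beta+1}\bigr]/(t-1)$ when $t>1$ (treating the trivial $t=1$ case separately), or, more robustly, differentiate under the integral sign:
\begin{align*}
\Phi'(w) = 1 + \int_0^{t-1}\bigl(1-wr\bigr)^\beta\,dr + w\int_0^{t-1}(-r)\,\beta\bigl(1-wr\bigr)^{\beta-1}\,dr.
\end{align*}
The first two terms are positive, but the third term is negative, so a little care is needed; the cleanest route is to bound the negative term. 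Since $wr = w(\sigma-1)\le w < 1$ on the domain and $\beta\ge 1$, one has $\beta w r (1-wr)^{\beta-1}\le \beta\,wr\le \beta$, but a sharper and sufficient estimate is $w r\,\beta(1-wr)^{\beta-1}\le \beta(1-wr)^{\beta-1}\cdot wr$ combined with $(1-wr)^\beta \ge (1-wr)^\beta$; alternatively, integrate the exact antiderivative. In fact the exact computation is easiest: from $\Phi(w)=w+\frac{1-(1-w(t-1))^{\beta+1}}{(\beta+1)(t-1)}$ one gets $\Phi'(w)=1-(1-w(t-1))^{\beta}$, which is strictly positive whenever $0<w(t-1)<1$, i.e. for all $w\in(0,1)$ and $t\in(1,2]$ (note $w(t-1)\le w<1$). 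Hence $\Phi$ is strictly increasing and the result follows.

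The main obstacle — really the only subtlety — is keeping the absolute value under control: one must verify that the delay argument never makes $1-u(t-1)$ change sign on the relevant range, which is precisely why the argument stops at $t=2$. For $t\le 2$ the delayed value $u(t-1)=w_0(t-1)$ is explicit and stays in $[0,w_0)\subset[0,1)$, so $|1-u(t-1)|=1-u(t-1)$ with no case distinction; beyond $t=2$ the delay term $u(t-1)$ is itself an integral expression whose relation to $1$ is not controlled a priori, so the same elementary monotonicity argument breaks down — this is exactly the gap that makes the full statement \eqref{cp1} a conjecture rather than a theorem. A secondary, purely cosmetic point is to present the computation of the antiderivative $\int (1-wr)^\beta\,dr$ correctly for general real $\beta\ge 1$ (it is $-\frac{(1-wr)^{\beta+1}}{w(\beta+1)}$), and to note the boundary case $t=1$ where the integral vanishes and the inequality reduces to $w_0<\tilde w_0$.
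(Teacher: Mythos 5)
Your approach is exactly the paper's: on $(0,1]$ the inequality is immediate from the explicit linear form, and on $(1,2]$ you integrate the DDE using the fact that the delayed argument lies in the explicit initial segment, obtaining a closed-form expression whose monotonicity in $w_0$ gives the result. The idea is correct, but your closed-form expression contains a spurious factor: the integral $w\int_0^{t-1}(1-wr)^\beta\,dr$ equals $\frac{1}{\beta+1}\bigl(1-(1-w(t-1))^{\beta+1}\bigr)$ with no division by $(t-1)$, matching the paper's formula $u(t)=w_0+\frac{1}{\beta+1}\bigl(1-(1-w_0(t-1))^{\beta+1}\bigr)$; consequently your claimed derivative $\Phi'(w)=1-(1-w(t-1))^\beta$ is also wrong (and would in fact vanish at $t=1^+$). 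The correct derivative is $\Phi'(w)=1+(t-1)\bigl(1-w(t-1)\bigr)^\beta\ge 1>0$, which makes the strict monotonicity even more immediate and removes any need to restrict to $0<w(t-1)<1$. These are algebra slips rather than gaps in the argument; the observation that the delayed value $w_0(t-1)$ stays in $[0,1)$ so the absolute value can be dropped, and that this is precisely why the argument stops at $t=2$, is correct and is the substantive content of the proof.
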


\begin{proof}
Since the case $0 < t \le 1$ follows directly from the initial condition, we focus on the interval $1 < t \le 2$.

Let $u(t) \coloneqq u(t; \beta, w_0)$. For $t \in [1,2]$, we compute
\begin{align*}
    u(t) &= u(1) + \int_1^t w_0 \left(1 - w_0(s-1)\right)^\beta \, ds \\
    &= w_0 + \frac{1}{\beta + 1} \left(1 - \left(1 - w_0(t - 1)\right)^{\beta + 1} \right),
\end{align*}
which is monotonically increasing with respect to $w_0$ for fixed $t \in [1,2]$, proving the result.
\end{proof}

This conjecture is also related to the mathematical analysis of finite-time blow-up phenomena in delay differential equations, which will be further discussed in Section~\ref{sec:blowup}.

\section{Applications}\label{sec:application}
\setcounter{equation}{0}

We consider the press control problem under a given maximum velocity constraint. In practical settings, press machines have a maximum velocity $\vmax$ due to safety and mechanical limitations. If $\beta$ is fixed, selecting the initial velocity as $\ell G/\tau~ (G\coloneqq g(\beta))$ is optimal according to (\ref{best_v}). Therefore, if $\ell G/\tau \le \vmax$, i.e., $\ell \le \ell_1\coloneqq \tau \vmax/G$, the press velocity is given by
\begin{align}
    V(X) = \frac{\ell G}{\tau}\left|\frac{\ell-X}{\ell}\right|^\beta.
\end{align}
Here, $X$ denotes the position information defined in \eqref{delay_pos} and is set to $0$ for $t<\tau$. If $\ell > \ell_1$, the press operates at the maximum velocity $\vmax$ initially, then switches modes just before an overshoot occurs:
\[
    V(X) = \left\{
        \begin{array}{ll}
            \vmax & \text{if } \ell_1 < \ell - X,\\
            {\DS \vmax\left|\frac{\ell-X}{\ell}\right|^\beta} & \text{if } \ell_1 \ge \ell - X.
        \end{array}
    \right.
\]
This can be summarized more compactly as:
\begin{align}\label{new_press}
    V(X) = v_0\left|\frac{\min\{\ell_1, \ell-X\}}{\min\{\ell_1, \ell\}}\right|^\beta,
\end{align}
where $v_0\coloneqq \min\{\ell G/\tau, \vmax\}$, $\ell_1\coloneqq \tau\vmax/G$, and $G\coloneqq g(\beta)$.

Let $\dt < \tau$, $N \coloneqq \lfloor\tau/\dt\rfloor$, and define $t_n = n\dt$ for $n \in \mathbb{N}$. For a numerical comparison between \eqref{old_press} and \eqref{new_press}, we formulate the following discrete problems.

\begin{Prob}\label{old_sys}
    Set $x_n = v_0 t_n$ for $n = 0,1,\ldots, N$, and compute $x_n$ for $n = N+1, N+2, \ldots$ by solving the recurrence
    \begin{align}\label{old_discr}
        \frac{x_{n+1} - x_n}{\dt} = v_0\left|\frac{\ell-x_{n-N}}{\ell}\right|.
    \end{align}
\end{Prob}

\begin{Prob}\label{new_sys}
    Set $x_n = v_0 t_n$ for $n = 0,1,\ldots, N$, and compute $x_n$ for $n = N+1, N+2, \ldots$ by solving the recurrence
    \begin{align}\label{new_discr}
        \frac{x_{n+1} - x_n}{\dt} 
        = v_0\left|\frac{\min\{\ell_1, \ell-x_{n-N}\}}{\min\{\ell_1, \ell\}}\right|^\beta.
    \end{align}
\end{Prob}
\medskip

Table \ref{tab_new} summarizes the time required to approach the target position in Problems \ref{old_sys} and \ref{new_sys} for $\beta=2,5,10,50$ (see also Figure~\ref{fig_new}). We set $\tau=40$, $\vmax=10$, and $\dt=0.5$. In Problem \ref{new_sys} with $\ell = 1000$ and $\beta = 50$, the computed trajectory $x_n$ exhibited a sharp deceleration just before reaching $\ell$, stalling approximately $0.2$ units short of the target within the simulated time.

For smaller values of $\beta$, the pressing time is longer, indicating that larger $\beta$ values are preferable for efficiency. However, when $\beta$ becomes too large, the arrival time plateaus, and the rapid variation of the velocity near $\ell$ increases the risk of missing the target stroke position.

\begin{table*}[htpb]\centering
    \begin{tabular}{c|c|c|c}
        $\ell$ & Press control model & $\min\{t_n \mid \ell - x_{n-N} < 1\}$ & $\min\{t_n \mid \ell - x_{n-N} < 0.1\}$ \\ \hline
        100 & Problem \ref{old_sys} & 235 ($235/235=1$) & 274 ($274/274=1$) \\
         & Problem \ref{new_sys} with $\beta=2$ & 125 ($125/235\simeq0.53$) & 142 ($142/274\simeq0.52$) \\
         & Problem \ref{new_sys} with $\beta=5$ & 98.5 ($98.5/235\simeq0.42$) & 108 ($108/274\simeq0.39$) \\
         & Problem \ref{new_sys} with $\beta=10$ & 88.5 ($88.5/235\simeq0.38$) & 95.5 ($95.5/274\simeq0.35$) \\
         & Problem \ref{new_sys} with $\beta=50$ & 81.0 ($81.0/235\simeq0.34$) & 83.0 ($83.0/274\simeq0.30$) \\ \hline 
        1000 & Problem \ref{old_sys} & 274 ($274/274=1$) & 285 ($285/285=1$)\\
         & Problem \ref{new_sys} with $\beta=2$ & 178 ($178/274\simeq0.65$) & 191 ($191/285\simeq0.67$)\\
         & Problem \ref{new_sys} with $\beta=5$ & 158 ($158/274\simeq0.58$) & 166 ($166/285\simeq0.58$)\\
         & Problem \ref{new_sys} with $\beta=10$ & 149 ($149/274\simeq0.54$) & 158 ($158/285\simeq0.55$)\\
         & Problem \ref{new_sys} with $\beta=50$ & 141 ($141/274\simeq0.51$) & --- \\ \hline 
    \end{tabular}
    \caption{Press times $\min\{t_n \mid \ell - x_{n-N} < 1\}$ and $\min\{t_n \mid \ell - x_{n-N} < 0.1\}$ for Problems \ref{old_sys} and \ref{new_sys} with $\beta=2,5,10,50$, $\tau=40$, $\vmax=10$, and $\dt=0.5$. For Problem \ref{new_sys} with $\beta=50$ and $\ell=1000$, the trajectory stalled approximately $0.2$ units short of the target within the simulation period.}
    \label{tab_new}
\end{table*}

\begin{center}
  \includegraphics[width=\columnwidth,clip]{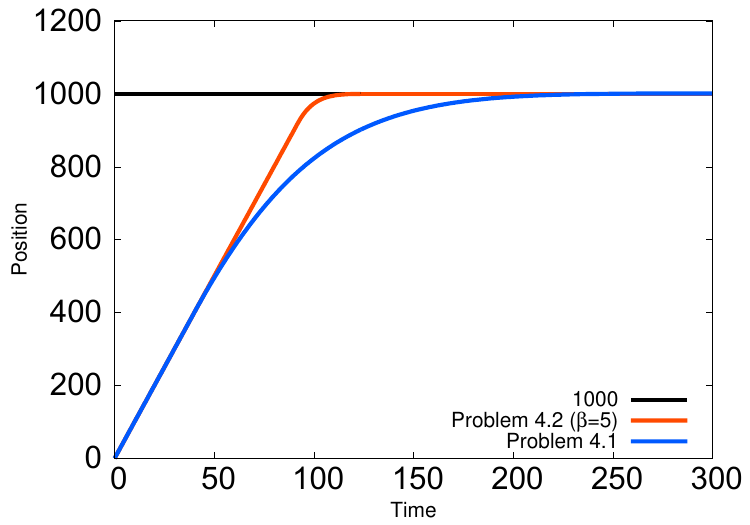}
  \captionof{figure}{Numerical results for Problem \ref{old_sys} (blue line) and Problem \ref{new_sys} (red line) with $\ell=1000$, $\vmax=10$, $\tau=40$, $\beta=5$, and $\dt=0.5$.}
  \label{fig_new}
\end{center}

\begin{Rem}
    {\rm
    It takes $\max\{\tau, \ell/\vmax\}$ time to reach the target press position. Moreover, due to the time delay, an additional $\tau$ is required to determine when to stop. Thus, from the start of pressing to complete stoppage, the total time is $\max\{\tau, \ell/\vmax\}+\tau$. In the examples shown in Table \ref{tab_new} and Figure~\ref{fig_new}, when $\ell=100$, it takes over $80$ time units ($40 + 40$), and when $\ell=1000$, it takes over $140$ time units ($1000/10 + 40$) to reach the target position.
    }
\end{Rem}

\section{Blow-up Phenomenon of DDEs}\label{sec:blowup}
\setcounter{equation}{0}

As mentioned in the introduction, the behavior of DDE solutions is complex, and many aspects remain unclear. For example, regarding the finite-time blow-up phenomenon of solutions, \cite{EJ06,IN22} analyze DDEs of the type
\[
    \frac{dz}{dt}(t) = f(z(t))h(z(t-1)).
\]
In this section, we explain how the mathematical analysis and overshoot conjecture for DDEs in Section~\ref{sec:num} can be applied to the blow-up problem for solutions of DDEs of the above type.

For the nondimensionalized system \eqref{sys_normalize}, applying the variable transformation $z = \frac{1}{1-u}$ and noting that $\frac{du}{dt} = z^{-2}\frac{dz}{dt}$, we obtain
\begin{equation}\label{sys_inverse}\left\{\begin{aligned}
    \frac{dz}{dt}(t)
    &= w_0\frac{(z(t))^2}{(z(t-1))^\beta} 
    && \text{if } t>1,\\
    z(t) &= \frac{1}{1-w_0 t} && \text{if } 0 \le t \le 1,
\end{aligned}\right.\end{equation}
In this formulation, whether or not $u$ overshoots (i.e., exceeds 1) corresponds to whether or not $z$ blows up in finite time.

From Theorem~\ref{Th:non-decreasing}, we can deduce that if blow-up occurs, the blow-up rate satisfies \( z(t) \sim \frac{c}{t_*-t} \) as \( t \to t_*^- \), which is of the same order as the blow-up rate of the ordinary differential equation \( z'(t) = \alpha z(t)^2 \).

\begin{Th}\label{th:blowup}
Suppose that $h\in C^0((0,\infty))$ with $h(s)>0$ for $s>0$.
Let $a, t_* \in \mathbb{R}$ with $a + 1 < t_*$. 
Assume that 
$z \in C^0([a, t_*)) \cap C^1([a+1, t_*))$, 
$z(t) > 0$, and 
\begin{align*}
    \frac{dz}{dt}(t)
    = (z(t))^2h(z(t-1)) \quad (a+1 \le t < t_*).
\end{align*}
If $z(t) \to \infty$ as $t \to t_*^-$, then
\begin{align}\label{lim:finite}
    z(t) = \frac{c}{t_* - t} + o\left(\frac{1}{t_* - t}\right) 
    \quad \text{as } t \to t_*^-,
\end{align}
where $c \coloneqq h(z(t_* - 1))^{-1}  >0$, and $o$ is Landau's symbol.
\end{Th}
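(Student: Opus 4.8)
The plan is to integrate the differential equation near the blow-up time $t_*$ and show that, to leading order, the delayed factor $h(z(t-1))$ is asymptotically constant, so that the equation reduces to the Riccati-type ODE $z' = z^2 h(z(t_*-1))$, whose solution blows up like $c/(t_*-t)$ with $c = h(z(t_*-1))^{-1}$. First I would observe that since $z$ is continuous on $[a,t_*)$ and $a+1 < t_*$, the value $z(t_*-1)$ is a finite positive number; by continuity of $z$ on a neighborhood of $t_*-1$ and continuity of $h$ on $(0,\infty)$, we have $h(z(t-1)) \to h(z(t_*-1)) =: 1/c > 0$ as $t \to t_*^-$. Fix $\vep > 0$; then there is $\delta \in (0, t_*-a-1)$ such that for $t \in (t_*-\delta, t_*)$ we have $\frac1c - \vep < h(z(t-1)) < \frac1c + \vep$.

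Next I would convert the differential inequality into an integral one. Since $z > 0$ and $z' = z^2 h(z(t-1))$, we have $-\frac{d}{dt}\bigl(1/z(t)\bigr) = h(z(t-1))$ on $[a+1, t_*)$. Integrating this identity from $t$ to $r$ for $t_*-\delta < t < r < t_*$ gives
\begin{align*}
    \frac{1}{z(t)} - \frac{1}{z(r)} = \int_t^r h(z(\sigma-1))\,d\sigma,
\end{align*}
and since $z(r) \to \infty$ as $r \to t_*^-$, letting $r \to t_*^-$ yields
\begin{align*}
    \frac{1}{z(t)} = \int_t^{t_*} h(z(\sigma-1))\,d\sigma \quad (t_*-\delta < t < t_*).
\end{align*}
Now the bounds on $h(z(\sigma-1))$ on the interval $(t_*-\delta, t_*)$ give $\bigl(\frac1c - \vep\bigr)(t_*-t) \le \frac{1}{z(t)} \le \bigl(\frac1c + \vep\bigr)(t_*-t)$, hence
\begin{align*}
    \frac{1}{\frac1c + \vep} \le (t_*-t)\,z(t) \le \frac{1}{\frac1c - \vep}
\end{align*}
for $t$ close enough to $t_*$. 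Since $\vep > 0$ is arbitrary, $(t_*-t)z(t) \to c$ as $t \to t_*^-$, which is exactly \eqref{lim:finite}.

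The main technical point — and the only place where care is needed — is justifying the passage $r \to t_*^-$ in the integral identity: one must know that $1/z(r) \to 0$, which is immediate from the hypothesis $z(r) \to \infty$, and that the integral $\int_t^{t_*} h(z(\sigma-1))\,d\sigma$ converges, which holds because the integrand is continuous and bounded on the bounded interval $[t, t_*]$ (its values there lie within $\vep$ of $1/c$). There is no genuine obstacle here; the delay has been tamed simply by noting that the argument $z(\sigma-1)$ stays in a region where $z$ is already known to be continuous, so $h$ of it converges. I would also remark that this argument does not itself establish that blow-up occurs — it only describes the rate assuming it does — and that the link to Theorem~\ref{Th:non-decreasing} is that overshoot of $u$ past $1$ corresponds, under $z = 1/(1-u)$, precisely to $z$ reaching $+\infty$, with $t_*$ here the same threshold time as $T_*$ there.
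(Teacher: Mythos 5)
Your argument is correct and follows essentially the same route as the paper: both hinge on the reciprocal substitution (the paper works with $y=-1/z$ and computes its one-sided derivative at $t_*$, while you integrate $-\frac{d}{dt}(1/z)=h(z(t-1))$ up to $t_*$ with $\vep$-bounds), and both exploit that $h(z(t-1))\to h(z(t_*-1))$ by continuity since $t_*-1<t_*$ lies where $z$ is still finite. The two write-ups are interchangeable in substance, so no further comparison is needed.
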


\begin{proof}
Define
\begin{align*}
    y(t) \coloneqq 
    \begin{cases}
        {\DS - \frac{1}{z(t)}} & (a \le t < t_*)\\
        0 & (t = t_*).
    \end{cases}
\end{align*}
Then $y \in C^0([a, t_*])$ and $y(t_*) = 0$.

For $a + 1 \le t < t_*$, we compute
\begin{align*}
    \frac{dy}{dt}(t)
    &= \frac{1}{z(t)^2} \cdot \frac{dz}{dt}(t)\\
    &= \frac{1}{z(t)^2}\cdot (z(t))^2h(z(t-1))\\
    &=h(z(t-1)).
\end{align*}
In particular, we obtain
\begin{align*}
    \lim_{t \to t_*^-} \frac{dy}{dt}(t) = h(z(t_* - 1)) = c^{-1} > 0.
\end{align*}
Since $y$ is continuous on $[a,t_*]$, differentiable on $[a+1,t_*)$, and the limit of $\frac{dy}{dt}(t)$ as $t\to t_*^-$ exists and is finite, the mean value theorem implies that $y$ is left-differentiable at $t_*$ with
\[
	\frac{dy}{dt}(t_*) = \lim_{t\to t_*^-} \frac{dy}{dt}(t) = c^{-1}.
\]
In particular, $y \in C^1([a+1, t_*])$.
Hence, we compute
\begin{align*}
    \lim_{t \to t_*^-} (t_* - t) z(t)
    &= \lim_{t \to t_*^-} \frac{t_* - t}{ - y(t)}\\
    &= \lim_{t \to t_*^-} \frac{t_* - t}{y(t_*) - y(t)}\\
    &= \left( \frac{dy}{dt}(t_*) \right)^{-1} = c,
\end{align*}
which implies the asymptotic behavior \eqref{lim:finite}.
\end{proof}
\begin{Rem}{\rm
In Theorem~\ref{th:blowup}, if $h(s) = w_0 s^{-\beta}$, then for $t \in (t_* - \varepsilon, t_*)$ (with $\varepsilon > 0$ sufficiently small), the function $u(t) = 1 + y(t)$ satisfies $u(t) \in (0,1)$ and
\begin{align*}
\frac{du}{dt}(t) &= \frac{dy}{dt}(t) 
= h(z(t-1)) \\
&= h\left( \frac{1}{1 - u(t-1)} \right)
= w_0 (1 - u(t-1))^\beta,
\end{align*}
which is exactly the same equation as \eqref{eq:delay}.  
This shows that Theorem~\ref{th:blowup} is simply an alternative formulation of Theorem~\ref{Th:non-decreasing}.
}
\end{Rem}

Furthermore, Conjecture \ref{Conj:overshoot} suggests a complete characterization of blow-up behavior in terms of the initial parameter $w_0$. Specifically, it predicts the existence of a critical threshold $g(\beta)$ that determines whether finite-time blow-up occurs. Using the transformation $z = \frac{1}{1-u}$, the behavior described in Conjecture \ref{Conj:overshoot} can be directly translated into statements about the blow-up properties and comparison principles for solutions of \eqref{sys_inverse}.

\begin{Prop}\label{Prop:blowup}
Assume that Conjecture \ref{Conj:overshoot} holds. 
Let $g(\beta)$ and $T_*=T_*(\beta, w_0)>0$ be as described in Conjecture \ref{Conj:overshoot}.
For $\beta\ge 1$, the solution $z(t;\beta,w_0)$ of \eqref{sys_inverse} satisfies the following properties.
\begin{enumerate}
\renewcommand{\labelenumi}{(\roman{enumi})}
\item 
    For $w_0 \in (g(\beta),1)$, it holds that
    \begin{align*}
        z(t;\beta, w_0)
        \begin{cases}
            <\infty&(0\le t <T_*)\\
            \to\infty&(t \to T_*^-).
        \end{cases}
    \end{align*}
\item 
    For $w_0\in (0,g(\beta)]$, it holds that
    \begin{align*}
        z(t;\beta, w_0)
        \begin{cases}
            <\infty&(t\ge 0)\\
            \to \infty&(\text{as } t\to \infty ).
        \end{cases}
    \end{align*}
\item 
    The comparison properties 
    \begin{align*}
        &w_0 < \tilde{w}_0 \\ \Rightarrow\,	&
        z(t;\beta,w_0) < z(t;\beta,\tilde{w}_0)\quad(t>0)
    \end{align*}
    and 
    \begin{align*}
        &\beta < \tilde{\beta} \\ \Rightarrow\,	&
        z(t; \beta, g(\beta)) < z(t; \tilde{\beta}, g(\tilde{\beta}))
        \quad(t>0)
    \end{align*}
    hold.
\end{enumerate}
\end{Prop}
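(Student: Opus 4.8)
The plan is to obtain Proposition~\ref{Prop:blowup} as a direct corollary of Conjecture~\ref{Conj:overshoot} via the change of variables $z = 1/(1-u)$, checking only that the correspondence is valid on the relevant domains. The key observation is that the map $u \mapsto z = 1/(1-u)$ is a strictly increasing bijection from $(-\infty,1)$ onto $(0,\infty)$, so qualitative statements about $u$ staying below $1$, reaching $1$, or converging to $1$ translate into statements about $z$ staying finite, blowing up, or diverging to $\infty$, respectively, and ordering is preserved.

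First I would record the reduction carefully. By the computation preceding \eqref{sys_inverse}, if $u(t) = u(t;\beta,w_0)$ solves \eqref{sys_normalize} and $u(t) < 1$ on an interval, then $z(t) \coloneqq 1/(1-u(t))$ is $C^1$ there and solves the first equation of \eqref{sys_inverse}; the initial segment $u(t) = w_0 t$ on $[0,1]$ (where $w_0 t < 1$ since $w_0 \in (0,1)$) gives exactly $z(t) = 1/(1-w_0 t)$, matching the initial data in \eqref{sys_inverse}. Conversely, any solution $z$ of \eqref{sys_inverse} that is finite and positive on an interval yields $u = 1 - 1/z < 1$ solving \eqref{sys_normalize} there. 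Thus on any maximal interval where $u < 1$, the two problems are equivalent, and $z(t;\beta,w_0) = 1/\bigl(1 - u(t;\beta,w_0)\bigr)$. I would state this as a short lemma or simply inline it.

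Then the three items follow mechanically. For (i): by Conjecture~\ref{Conj:overshoot}(i), $u(t;\beta,w_0) < 1$ for $0 \le t < T_*$, so $z(t;\beta,w_0) = 1/(1-u(t;\beta,w_0))$ is finite on $[0,T_*)$; and $u(t;\beta,w_0) \to 1^-$ as $t \to T_*^-$ (which holds by continuity of $u$ and $u(T_*)=1$, combined with $u<1$ just before $T_*$), hence $z(t;\beta,w_0) \to \infty$ as $t \to T_*^-$. For (ii): by Conjecture~\ref{Conj:overshoot}(ii), $u(t;\beta,w_0) < 1$ for all $t \ge 0$ so $z$ is finite for all $t\ge 0$, and $u(t;\beta,w_0)\to 1^-$ as $t\to\infty$ gives $z(t;\beta,w_0)\to\infty$. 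For (iii): the function $\phi(u) \coloneqq 1/(1-u)$ is strictly increasing on $(-\infty,1)$, so the inequalities $u(t;\beta,w_0) < u(t;\beta,\tilde w_0)$ (from \eqref{cp1}) and $u(t;\beta,g(\beta)) < u(t;\tilde\beta,g(\tilde\beta))$ (from \eqref{beta_incre}), valid for $t>0$ on the respective ranges where both sides are below $1$, push forward to the claimed inequalities for $z$. One minor point to address in (iii) is that for the first comparison, if $\tilde w_0 > g(\beta)$ the solution $u(\cdot;\beta,\tilde w_0)$ only stays below $1$ on $[0,T_*(\beta,\tilde w_0))$, so the inequality $z(t;\beta,w_0) < z(t;\beta,\tilde w_0)$ should be read on $(0,T_*(\beta,\tilde w_0))$ with $z(t;\beta,\tilde w_0) = +\infty$ afterwards, which is still consistent with the stated inequality; I would add a sentence clarifying this convention.

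There is essentially no hard analytic obstacle here, since all the substantive content is quarantined inside the assumed Conjecture~\ref{Conj:overshoot}; the proof is a bookkeeping exercise in transporting statements across a monotone bijection. The only place demanding a little care is making the phrase ``$z$ blows up'' precise and verifying that $u \to 1^-$ at the overshoot time $T_*$ — this is immediate from continuity of $u$ together with $u(t)<1$ for $t<T_*$ and $u(T_*)=1$ — and likewise that the strict monotonicity of $u$ in $w_0$ and $\beta$ is preserved rather than merely weak monotonicity, which is exactly what \eqref{cp1} and \eqref{beta_incre} (item (iii) of the conjecture) already assert.
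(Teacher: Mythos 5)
Your proposal is correct and follows exactly the route the paper intends: the paper states Proposition~\ref{Prop:blowup} without a formal proof, remarking only that the transformation $z = 1/(1-u)$ directly translates Conjecture~\ref{Conj:overshoot} into the blow-up statements, and your argument is a careful write-up of precisely that translation (including the appropriate caveat in item (iii) about the comparison after the smaller threshold's blow-up time). Nothing is missing.
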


\section{From Industrial Challenge to Mathematical Development}\label{sec:conclusion}
\setcounter{equation}{0}
In this study, we investigated the press control problem in straightening machines—used to reduce post-heat-treatment warping and improve the linear accuracy of automotive shafts—from a mathematical viewpoint. Among various factors affecting the control performance, we focused on the small but non-negligible time delay caused by communication and information processing between the machine and the control system. This analysis was grounded in empirical data from actual industrial equipment. To address this, we proposed a novel control strategy that generalizes conventional linear velocity control to a polynomial $\beta$-control of arbitrary degree $\beta \ge 1$.

The proposed control strategy led to a mathematical formulation as a delay differential equation (DDE) problem, with the initial velocity serving as a control parameter. In Subsection~\ref{subsec:scale}, we introduced a nondimensionalized version of the model and rigorously established several fundamental mathematical properties: the global existence of a unique solution, and the characterization of overshoot behavior. These results provide a solid theoretical foundation for analyzing the control problem.

In Subsection~\ref{subsec:num}, numerical simulations revealed the existence of a threshold initial velocity $w_0 = g(\beta)$ that determines whether overshoot occurs, depending on the value of $\beta$. These observations were formulated as a mathematical conjecture in Subsection~\ref{conjecture}, which also included partial evidence supporting its validity.

In Section~\ref{sec:application}, we proposed a new algorithm for the press control problem under a maximum velocity constraint, utilizing this conjecture, and demonstrated its effectiveness through numerical examples. Furthermore, in Section~\ref{sec:blowup}, we showed that our analysis and  the conjecture in Section~\ref{sec:num} are closely related to the finite-time blow-up phenomenon in the solutions of DDEs, derived an estimate of the blow-up rate, and showed the existence of a threshold value that would hold if the conjecture is true.

As illustrated in Fig.~\ref{fig_realdelay}, the time delay in the actual press-machine data is not constant; 
rather, it exhibits a time-varying structure $\tau(t)$. 
In this paper, as a first step toward a full analysis, we restrict our study to the case of a constant delay and develop the theoretical framework under this assumption. 
Nevertheless, the results in Subsection~\ref{subsec:scale} can be extended to the case of a time-varying delay $\tau(t)$, based on the general theory of functional differential equations (see, e.g., \cite{HVL93}). 
A thorough investigation of such extensions is left for future research.

In particular, understanding how a time-varying delay affects the stability and performance of the system—especially when fluctuations around the mean delay become large—is an important and highly intriguing problem. 
It would be natural to pursue this direction within a stochastic framework, for example by formulating the model as a delay differential equation with stochastically varying delay. 
We believe that the present work provides an essential first step toward such promising future developments.

This study not only addressed a real-world industrial challenge through mathematical modeling and control design, but also contributed new theoretical insights into the dynamics of delay differential equations.
It is widely recognized that the collaboration between industry and mathematics will become increasingly important in the future. Though modest, we believe this study represents a meaningful first step in contributing to such interdisciplinary efforts.

As for future mathematical directions, a rigorous proof of Conjecture~\ref{Conj:overshoot} remains an important open problem. 
Although this conjecture lies at the core of the present study, its proof does not appear straightforward, and at this stage we were unable to obtain a fully rigorous analysis. 
Nevertheless, careful numerical experiments strongly support its validity. 
We therefore present it explicitly as a conjecture and leave its mathematical resolution for future work.

Additionally, the relationship between control and blow-up behavior in DDEs (Section~\ref{sec:blowup}) is still an underexplored area, and further research is highly anticipated.

\section*{Acknowledgement}
This research originated from discussions held at the Study Group for Industrial Problems, hosted by the Graduate School of Mathematical Sciences, the University of Tokyo. The workshop was held over several one-week sessions between February 2014 and February 2024. The authors would like to thank the participants for their valuable contributions during these sessions.

The industrial context and motivation for this research were provided through collaborative discussions with engineers from Towa Seiki Co. Ltd. The data and technical insights obtained through this collaboration played a crucial role in shaping the direction of the study.

This work was partially supported by Towa Seiki Co., Ltd.
M.Y. was supported by JSPS KAKENHI Grant Numbers JP20H00117 and JP21K18142.
Y.L. was supported by JSPS KAKENHI Grant Numbers JP22K13954, JP23KK0049 and Guangdong Basic and Applied Basic Research Foundation (No. 2025A1515012248).

\end{multicols}
\end{document}